\documentclass[10pt,a4paper]{article}
\usepackage{amsfonts,amsmath,amssymb,amsthm,hyperref}
\usepackage[margin=1in]{geometry}

\parindent 0pt
\title{An Alternative Proof of Hesselholt's Conjecture on Galois Cohomology of Witt Vectors of Algebraic Integers}
\author{Wilson Ong}
\date{}

\newtheorem{thm}{Theorem}[section]
\newtheorem{prop}[thm]{Proposition}
\newtheorem{lem}[thm]{Lemma}

\theoremstyle{definition}

\theoremstyle{remark}

\DeclareMathOperator{\Gal}{Gal}
\DeclareMathOperator{\tr}{tr}

\begin{document}

\maketitle

\begin{abstract}
Let $K$ be a complete discrete valuation field of characteristic zero with residue field $k_K$ of characteristic $p>0$. Let $L/K$ be a finite Galois extension with Galois group $G=\Gal(L/K)$ and suppose that the induced extension of residue fields $k_L/k_K$ is separable. Let $\mathbb{W}_n(\cdot)$ denote the ring of $p$-typical Witt vectors of length $n$. Hesselholt conjectured that the pro-abelian group $\{H^1(G,\mathbb{W}_n(\mathcal{O}_L))\}_{n\geq 1}$ is isomorphic to zero. Hogadi and Pisolkar have recently provided a proof of this conjecture. In this paper, we provide an alternative proof of Hesselholt's conjecture which is simpler in several respects.
\end{abstract}

\section{Literature Review}

Let $K$ be a complete discrete valuation field of characteristic zero with residue field $k_K$ of characteristic $p>0$. Let $L/K$ be a finite Galois extension with Galois group $G=\Gal(L/K)$ and suppose that the induced extension of residue fields $k_L/k_K$ is separable. Let $\mathbb{W}_n(\cdot)$ denote the ring of $p$-typical Witt vectors of length $n$. In Hesselholt's paper \cite{hesselholt} it is conjectured that the pro-abelian group $\{H^1(G,\mathbb{W}_n(\mathcal{O}_L))\}_{n\geq 1}$ is isomorphic to zero, and the conjecture is reduced to the case where $L/K$ is a totally ramified cyclic extension of degree $p$. Let $\sigma$ be a generator of $G$ and let $t:=v_L(\sigma(\pi_L)-\pi_L)-1$ denote the ramification break (see \cite[Chapter V, \S 3]{serre}) in the ramification filtration of $G$. Recall that $t$ does not depend on the choice of generator $\sigma$.

\bigskip

Hesselholt shows his conjecture holds for extensions with $t>\frac{e_K}{p-1}$. Hogadi and Pisolkar have recently provided a proof of the conjecture for all Galois extensions (see \cite{hogadi}). In this paper, we provide an alternative proof of Hesselholt's conjecture which is simpler in several respects. First let us recall some lemmas from \cite{hesselholt}:

\begin{lem}\label{lem:trineq}
For all $a\in\mathcal{O}_L$, $v_K(\tr(a))\geq\frac{v_L(a)+t(p-1)}{p}$.
\end{lem}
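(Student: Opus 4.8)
The plan is to deduce the inequality from the value of the different of $L/K$. After Hesselholt's reduction we may assume $L/K$ is totally ramified cyclic of degree $p$, so that $e(L/K)=p$, the valuation $v_L$ restricts to $p\,v_K$ on $K$, $\mathcal{O}_L=\mathcal{O}_K[\pi_L]$, and $\tr=\tr_{L/K}$. Since every nontrivial element of $G$ is a generator and (as recalled above) $t$ is independent of the choice of generator, $v_L(\sigma(\pi_L)-\pi_L)=t+1$ for all $\sigma\in G\setminus\{1\}$; hence the lower ramification groups $G_i=\{\sigma\in G: v_L(\sigma(\pi_L)-\pi_L)\ge i+1\}$ are equal to $G$ for $0\le i\le t$ and trivial for $i\ge t+1$, and the formula for the different in terms of ramification groups (see \cite{serre}) gives $v_L(\mathfrak{d}_{L/K})=\sum_{i\ge 0}(|G_i|-1)=(t+1)(p-1)=:d$.

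With $d$ in hand I would invoke the standard description of the inverse different, $\mathfrak{d}_{L/K}^{-1}=\{x\in L:\tr(x\mathcal{O}_L)\subseteq\mathcal{O}_K\}=\mathfrak{p}_L^{-d}$, so that $v_L(x)\ge -d$ forces $\tr(x)\in\mathcal{O}_K$. Given $a\in\mathcal{O}_L$ I would put $m:=v_L(a)$ and $r:=\lfloor(m+d)/p\rfloor$; then $v_L(\pi_K^{-r}a)=m-rp\ge -d$, hence $\pi_K^{-r}\tr(a)=\tr(\pi_K^{-r}a)\in\mathcal{O}_K$ by $K$-linearity of the trace, i.e.\ $v_K(\tr(a))\ge r=\lfloor(m+(t+1)(p-1))/p\rfloor$. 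It then remains only to compare this with the asserted bound: writing $x:=(m+t(p-1))/p$ and $(t+1)(p-1)=t(p-1)+(p-1)$, the number $px$ is an integer, so the fractional part of $x$ is a multiple of $1/p$, and a one-line case distinction yields $\lfloor x+\tfrac{p-1}{p}\rfloor\ge\lceil x\rceil\ge x$. Since $v_K(\tr(a))$ is an integer which is at least $\lfloor x+\tfrac{p-1}{p}\rfloor$, we conclude $v_K(\tr(a))\ge x=(m+t(p-1))/p$, as required.

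Everything after the first paragraph is bookkeeping with valuations and floors; the degree-$p$ hypothesis is used only through $e(L/K)=p$ and the fact that the ramification filtration has a single break. Accordingly, the only step with genuine content is the identification $v_L(\mathfrak{d}_{L/K})=(t+1)(p-1)$, and that is where I would expect difficulty if one tried to argue in greater generality. If one preferred to avoid the inverse different entirely, one could instead bound $v_K(\tr(\pi_L^{\,j}))$ for $0\le j\le p-1$ directly, expanding $a$ in the $\mathcal{O}_K$-basis $1,\pi_L,\dots,\pi_L^{p-1}$ and using Euler's identity $\tr\!\big(\pi_L^{\,j}/g'(\pi_L)\big)=\delta_{j,p-1}$ for the Eisenstein minimal polynomial $g$ of $\pi_L$; but routing through the different is shorter and is the approach I would take.
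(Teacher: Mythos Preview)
Your proof is correct and yields exactly the intermediate bound $v_K(\tr(a))\geq\lfloor (v_L(a)+(t+1)(p-1))/p\rfloor$ that the paper obtains by citing \cite[Chapter V, \S 3, Lemma 4]{serre}; you have essentially unpacked that citation by computing the different and invoking the inverse-different description of the trace. The floor comparison you spell out at the end is what the paper compresses into the phrase ``taking $K$-valuations gives the desired result.''
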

\begin{proof}
We know $a\in\mathfrak{p}_L^{v_L(a)}$, so from \cite[Chapter V, \S 3, Lemma 4]{serre}, we have $\tr(a)=\pi_K^{\lfloor\frac{(t+1)(p-1)+v_L(a)}{p}\rfloor}b$
for some $b\in\mathcal{O}_K$. Now taking $K$-valuations gives the desired result.
\end{proof}

\begin{lem}\label{lem:trdiff}
For all $a\in\mathcal{O}_L$, $v_K(\tr(a^p)-\tr(a)^p)=v_K(p)+v_L(a)$.
\end{lem}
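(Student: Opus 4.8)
The plan is to read the valuation off the multinomial expansion of $\tr(a)^p$, working in the totally ramified cyclic case of degree $p$. Assume $a\neq0$, put $m:=v_L(a)$, and set $b_i:=\sigma^i(a)$ for $0\le i\le p-1$, so that $\tr(a)=\sum_{i=0}^{p-1}b_i$, $\tr(a^p)=\sum_{i=0}^{p-1}b_i^p$, and $v_L(b_i)=m$ for all $i$. The multinomial theorem gives
\[
\tr(a)^p-\tr(a^p)=\sum_{\substack{k_0+\cdots+k_{p-1}=p\\ k_j\neq p\ \forall j}}\binom{p}{k_0,\dots,k_{p-1}}\,b_0^{k_0}\cdots b_{p-1}^{k_{p-1}}.
\]
In this range every $k_j<p$, so $k_0!\cdots k_{p-1}!$ is prime to $p$ and each multinomial coefficient is $p$ times the unit $\tfrac{(p-1)!}{k_0!\cdots k_{p-1}!}$; hence the sum equals $p\,c$ with
\[
c=\sum_{\substack{k_0+\cdots+k_{p-1}=p\\ k_j\neq p\ \forall j}}\frac{(p-1)!}{k_0!\cdots k_{p-1}!}\,b_0^{k_0}\cdots b_{p-1}^{k_{p-1}}\ \in\ \mathcal{O}_L.
\]
Every monomial here has $L$-valuation exactly $pm$, so $v_L(c)\ge pm$; since $v_L|_{K^{\times}}=p\,v_K$, this already gives $v_L\bigl(\tr(a)^p-\tr(a^p)\bigr)\ge v_L(p)+pm=p\bigl(v_K(p)+v_L(a)\bigr)$. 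Thus the lemma reduces to showing this inequality is an equality, i.e.\ that $c/\pi_L^{\,pm}$ is a unit of $\mathcal{O}_L$.

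To pin down $c/\pi_L^{\,pm}$ in $k_L$, the key observation is that the conjugates $b_i$ all have the same image in $\mathfrak{p}_L^{\,m}/\mathfrak{p}_L^{\,m+1}$. Indeed $L/K$ is totally, hence wildly, ramified of degree $p$, so $G=G_1$ and $v_L\bigl(\sigma(\pi_L)-\pi_L\bigr)=t+1\ge2$, whence $\sigma(\pi_L)/\pi_L\in1+\mathfrak{p}_L$. Since $\sigma$ fixes $k_L$ it acts $k_L$-linearly on the one-dimensional $k_L$-space $\mathfrak{p}_L^{\,m}/\mathfrak{p}_L^{\,m+1}$, by the scalar $\overline{(\sigma(\pi_L)/\pi_L)^m}=1$; hence $\sigma^i(a)\equiv a\pmod{\mathfrak{p}_L^{\,m+1}}$ for all $i$. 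So each $b_i/\pi_L^{\,m}$ reduces to one and the same $\beta\in k_L^{\times}$ (the class of $a/\pi_L^{\,m}$), and
\[
c/\pi_L^{\,pm}\ \equiv\ \beta^{\,p}\cdot\!\!\!\sum_{\substack{k_0+\cdots+k_{p-1}=p\\ k_j\neq p\ \forall j}}\!\!\!\frac{(p-1)!}{k_0!\cdots k_{p-1}!}\pmod{\mathfrak{p}_L}.
\]

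It remains to compute that integer modulo $p$. Applying the multinomial theorem to $(\underbrace{1+\cdots+1}_{p})^{p}=p^{p}$ and removing the $p$ terms that put all the weight on a single index (each equal to $1$), the surviving coefficients sum to $p^{p}-p$, so the displayed sum is $(p^{p}-p)/p=p^{p-1}-1\equiv-1\pmod p$ — equivalently, it is $\equiv(p-1)!\equiv-1$ by Wilson's theorem, the non-constant index tuples breaking into $S_p$-orbits of size divisible by $p$. Therefore $c/\pi_L^{\,pm}\equiv-\beta^{\,p}\neq0$ in $k_L$, so $v_L(c)=pm$ exactly, giving $v_L\bigl(\tr(a)^p-\tr(a^p)\bigr)=v_L(p)+pm$ and finally $v_K\bigl(\tr(a)^p-\tr(a^p)\bigr)=v_K(p)+v_L(a)$.

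The step I anticipate to be the crux is the claim that the scaled conjugates $b_i/\pi_L^{\,m}$ share a common reduction in $k_L$: this is precisely where the hypothesis is used, through $G=G_1$ (wild ramification), and it genuinely fails without it, since the polynomial $\sum\tfrac{(p-1)!}{k_0!\cdots k_{p-1}!}\,X_0^{k_0}\cdots X_{p-1}^{k_{p-1}}$ has nontrivial zeros. Everything else is the multinomial expansion together with the elementary congruence $p^{p-1}-1\equiv-1\pmod p$.
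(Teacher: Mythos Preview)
Your proof is correct and follows the same multinomial-expansion approach that the paper cites from Hesselholt. The one point of divergence is how you extract the \emph{exact} valuation once the inequality $v_L\bigl(\tr(a)^p-\tr(a^p)\bigr)\ge v_L(p)+pm$ is in hand: Hesselholt groups the cross-terms into packets with pairwise distinct valuations so that the minimum can be read off directly, whereas you observe that every monomial already has $L$-valuation $pm$ and then pass to the residue field, using $G=G_1$ to collapse all conjugates to a single $\beta\in k_L^\times$ and the combinatorial identity $\sum_{k_j<p}\tfrac{(p-1)!}{k_0!\cdots k_{p-1}!}=p^{p-1}-1\equiv-1\pmod p$ to see that the reduction is $-\beta^p\neq0$. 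Your route is arguably more transparent and entirely self-contained; the paper's route avoids the residue-field computation but requires organizing the terms more carefully. Either way the substance is the same.
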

\begin{proof}
This follows by expanding $\tr(a^p)-\tr(a)^p$ using the multinomial formula and grouping the resulting expression into summands with distinct valuations. See the proof of \cite[Lemma 2.2]{hesselholt} for the details. 
\end{proof}

Next, we provide an alternative elementary proof of \cite[Lemma 2.4]{hesselholt}:

\begin{lem}\label{lem:nonzeroclass}
Suppose that $a\in\mathcal{O}_L^{\tr=0}$ represents a non-zero class in $\frac{\mathcal{O}_L^{\tr=0}}{(\sigma-1)\mathcal{O}_L}$. Then $v_L(a)\leq t-1$.
\end{lem}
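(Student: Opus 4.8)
The plan is to argue by contraposition: I want to show that if $v_L(a) \geq t$, then the class of $a$ in $\frac{\mathcal{O}_L^{\tr=0}}{(\sigma-1)\mathcal{O}_L}$ is zero, i.e.\ $a \in (\sigma-1)\mathcal{O}_L$. The starting point is the standard fact (from the theory of the different in a totally ramified degree-$p$ extension) that $(\sigma-1)\mathcal{O}_L = \mathfrak{p}_L^{?}$ is not quite right, but rather that the image of $\sigma - 1$ on successive powers of $\mathfrak{p}_L$ shifts the valuation by $t$ away from the jump: concretely, for $a \in \mathfrak{p}_L^m$ one expects $(\sigma-1)a \in \mathfrak{p}_L^{m+t}$, and conversely every element of sufficiently high valuation that has trace zero should be hit. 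So the first step is to pin down the precise relationship between $v_L(b)$ and $v_L((\sigma-1)b)$, using that $v_L(\sigma(\pi_L) - \pi_L) = t+1$ and expanding $\sigma(b)/b$ or $\sigma(b) - b$ in terms of the uniformizer.

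The key step is then a successive-approximation (devissage) argument. Given $a \in \mathcal{O}_L^{\tr=0}$ with $v_L(a) \geq t$, I would try to find $b_1 \in \mathcal{O}_L$ with $v_L(a - (\sigma-1)b_1)$ strictly larger than $v_L(a)$, then iterate to produce a convergent series $\sum b_i$ whose image under $\sigma - 1$ is $a$; completeness of $\mathcal{O}_L$ then finishes the job. The obstruction to solving $(\sigma - 1)b = a + (\text{higher order})$ at each stage lives in the cokernel of $\sigma - 1$ on a one-dimensional $k_L$-vector space (a graded piece $\mathfrak{p}_L^m/\mathfrak{p}_L^{m+1}$), and this cokernel is detected precisely by the trace: the composite $\mathcal{O}_L \xrightarrow{\sigma-1}\mathcal{O}_L \xrightarrow{\tr} \mathcal{O}_K$ is zero, and a valuation/dimension count using Lemma~\ref{lem:trineq} shows that in the relevant range of valuations the trace map is the \emph{only} obstruction — that is, $\tr(a) = 0$ together with $v_L(a) \geq t$ forces solvability at each graded step. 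This is where Lemma~\ref{lem:trineq} does the real work: it controls exactly which graded pieces the trace sees, and comparing the jump in $v_K(\tr(\cdot))$ with the shift $t$ in $v_L$ induced by $\sigma - 1$ is what makes the counting come out.

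I expect the main obstacle to be the bookkeeping at the critical valuation $v_L(a) = t$ itself (the boundary case), where the trace constraint is genuinely needed and cannot be absorbed into an error term; away from this value the induction is essentially formal. A secondary subtlety is making sure the approximating elements $b_i$ can be chosen with controlled valuation so that the series $\sum b_i$ actually converges in $\mathcal{O}_L$ rather than merely in $L$ — this should follow because each correction strictly increases the valuation, but it needs the shift-by-$t$ estimate from the first step to be an equality (or at least sharp) rather than just an inequality. Once the approximation scheme is set up, taking the limit and invoking completeness yields $a \in (\sigma-1)\mathcal{O}_L$, contradicting the hypothesis that $a$ represents a non-zero class, and hence $v_L(a) \leq t-1$.
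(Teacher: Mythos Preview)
Your plan is sound and would yield a correct proof, but it is genuinely different from the paper's argument. The paper does not use successive approximation or completeness at all: instead it writes down an explicit $K$-basis $x_\mu=\prod_{i<\mu}\sigma^i(\pi_L)$ of $L$ (with $v_L(x_\mu)=\mu$), invokes the vanishing of $H^1(G,L)$ to express $a=\sum_\mu b_\mu(\sigma-1)x_\mu$ with $b_\mu\in K$, computes $v_L((\sigma-1)x_\mu)=t+\mu$, and then observes that if some $b_{\mu'}\notin\mathcal{O}_K$ the corresponding summand has $L$-valuation at most $-p+t+\mu'\le t-1$; since the summands have pairwise distinct valuations modulo $p$, the non-archimedean inequality gives $v_L(a)\le t-1$ in one stroke. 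Your approach trades this single basis computation for an iterative scheme relying on completeness of $\mathcal{O}_L$ and on a graded analysis of both $\sigma-1$ and $\tr$. One caution: you point to Lemma~\ref{lem:trineq} as the engine, but that lemma is only an inequality and cannot by itself show that the graded trace $\mathfrak{p}_L^m/\mathfrak{p}_L^{m+1}\to\mathfrak{p}_K^{r}/\mathfrak{p}_K^{r+1}$ is \emph{injective} at the levels $m\equiv t\pmod p$; for that you need the exact equality $\tr(\mathfrak{p}_L^m)=\mathfrak{p}_K^{\lfloor((t+1)(p-1)+m)/p\rfloor}$ from Serre together with a dimension count over $k_L=k_K$. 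With that in hand your d\'evissage goes through: the trace condition forbids $v_L(a)\equiv t\pmod p$ outright, and at every other level $m\ge t+1$ the map $\sigma-1$ surjects onto the graded piece from $\mathfrak{p}_L^{m-t}\subseteq\mathcal{O}_L$, so the correcting series converges in $\mathcal{O}_L$. The paper's route is shorter, avoids completeness, and never needs the graded trace analysis; yours makes more transparent why the trace is exactly the obstruction to lifting.
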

\begin{proof}
For each $0\leq \mu\leq p-1$, define $x_\mu=\prod_{0\leq i<\mu}\sigma^i(\pi_L)$. It is clear $v_L(x_\mu)=\mu$. Suppose
\[
a_0x_0+a_1x_1+\cdots+a_{p-1}x_{p-1}=0
\]
for some $a_0,a_1,\ldots,a_{p-1}\in K$. The summands on the left have distinct $L$-valuations modulo $p$ and thus distinct $L$-valuations, implying each summand must be zero by the non-archimedean property. Hence the $x_\mu$ are linearly independent over $K$ and thus span $L$ over $K$. Now recall $\frac{\ker(\tr)}{(\sigma-1)L}=H^1(G,L)=0$ (see \cite[Chapter VIII, \S 4]{serre} and \cite[Chapter X, \S 1, Proposition 1]{serre}). Hence $\mathcal{O}_L^{\tr=0}\subseteq(\sigma-1)L$ so we can write
\[
a=b_1(\sigma-1)x_1+b_2(\sigma-1)x_2+\cdots+b_{p-1}(\sigma-1)x_{p-1}
\]
for some $b_1,b_2,\ldots,b_{p-1}\in K$. It is clear from the definition of $x_\mu$ that $\pi_L\sigma(x_\mu)=x_\mu\sigma^\mu(\pi_L)$ for each $1\leq \mu\leq p-1$ so that $v_L((\sigma-1)x_\mu)=v_L(\frac{(\sigma^\mu-1)\pi_L}{\pi_L}\cdot x_\mu)=t+\mu$, implying the summands on the right have distinct $L$-valuations modulo $p$, and thus distinct $L$-valuations. Since $a\not\in(\sigma-1)\mathcal{O}_L$ by hypothesis, we must have $b_{\mu '}\not\in\mathcal{O}_K$ for some $\mu '$ so that $v_L(b_{\mu '}(\sigma-1)x_{\mu '})\leq-p+t+\mu '\leq-p+t+(p-1)$ for this $\mu '$. Hence by the non-archimedean property, we conclude $v_L(a)\leq t-1$, as required.
\end{proof}

\begin{lem}\label{lem:rmzeroforn1}
Let $m\geq 1$ be an integer and suppose that the map
\[
R^m_*\colon H^1(G,\mathbb{W}_{m+n}(\mathcal{O}_L))\rightarrow H^1(G,\mathbb{W}_n(\mathcal{O}_L))
\]
is equal to zero, for $n=1$. Then the same is true for all $n\geq 1$.
\end{lem}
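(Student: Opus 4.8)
The plan is to argue by induction on $n$, the case $n=1$ being precisely the hypothesis. So fix $n\geq 1$, assume $R^m_*\colon H^1(G,\mathbb{W}_{m+n}(\mathcal{O}_L))\to H^1(G,\mathbb{W}_n(\mathcal{O}_L))$ is zero, and deduce the same statement with $n$ replaced by $n+1$. Write $R$ for the restriction $\mathbb{W}_{j+1}(\mathcal{O}_L)\to\mathbb{W}_{j}(\mathcal{O}_L)$ and $V$ for the Verschiebung $\mathbb{W}_{j}(\mathcal{O}_L)\to\mathbb{W}_{j+1}(\mathcal{O}_L)$; both are natural in the ring, hence $G$-equivariant, and one checks on Witt components that $R^m\circ V=V\circ R^m$. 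The key input is the short exact sequence of $G$-modules, for each $j\geq 0$,
\[
0\to\mathbb{W}_{j}(\mathcal{O}_L)\xrightarrow{\,V\,}\mathbb{W}_{j+1}(\mathcal{O}_L)\xrightarrow{\,R^{j}\,}\mathbb{W}_1(\mathcal{O}_L)=\mathcal{O}_L\to 0,
\]
where $R^{j}$ is the $j$-fold restriction (it picks out the zeroth Witt component, with kernel exactly the image of $V$). Applying $R^m$ sends the sequence with $j=m+n$ to the sequence with $j=n$, acting as the identity on the $\mathcal{O}_L$-terms, and taking $G$-cohomology produces a commutative ladder with exact rows whose middle vertical map at $H^1$ is $R^m_*\colon H^1(G,\mathbb{W}_{m+n+1}(\mathcal{O}_L))\to H^1(G,\mathbb{W}_{n+1}(\mathcal{O}_L))$.

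Now take a class $x\in H^1(G,\mathbb{W}_{m+n+1}(\mathcal{O}_L))$. I would first show its image in $H^1(G,\mathcal{O}_L)$ under the top surjection $(R^{m+n})_*$ vanishes: this map factors as the truncation $H^1(G,\mathbb{W}_{m+n+1}(\mathcal{O}_L))\to H^1(G,\mathbb{W}_{m+1}(\mathcal{O}_L))$ followed by $R^m_*\colon H^1(G,\mathbb{W}_{m+1}(\mathcal{O}_L))\to H^1(G,\mathbb{W}_1(\mathcal{O}_L))$, and the latter is zero by the $n=1$ hypothesis. Hence $(R^{m+n})_*(x)=0$, so by exactness of the top row we may write $x=V_*(z)$ for some $z\in H^1(G,\mathbb{W}_{m+n}(\mathcal{O}_L))$. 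Commutativity of the ladder then gives $R^m_*(x)=V_*\bigl(R^m_*(z)\bigr)$, and $R^m_*(z)=0$ by the inductive hypothesis, whence $R^m_*(x)=0$. This closes the induction.

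I do not expect a genuine obstacle: once the right short exact sequence is in place, the argument is a two-step diagram chase. The only points needing care are (i) verifying that $R$ and $V$ are $G$-equivariant and commute, so that the ladder truly commutes, and (ii) bookkeeping the truncation maps correctly — in particular that the surjection $\mathbb{W}_{m+n+1}(\mathcal{O}_L)\to\mathcal{O}_L$ factors through $\mathbb{W}_{m+1}(\mathcal{O}_L)$, which is exactly what makes the $n=1$ hypothesis usable. Both are routine verifications on Witt components.
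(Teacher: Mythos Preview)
Your argument is correct and is exactly the approach the paper has in mind: it defers to Hesselholt's original Lemma~1.1, whose proof is precisely this induction on $n$ via the long exact sequence coming from $0\to\mathbb{W}_j(\mathcal{O}_L)\xrightarrow{V}\mathbb{W}_{j+1}(\mathcal{O}_L)\xrightarrow{R^j}\mathcal{O}_L\to 0$ together with the commutation $R^m V=V R^m$. The only cosmetic point is that calling $(R^{m+n})_*$ a ``surjection'' is slightly loose at the level of $H^1$, but you only use exactness there, so the proof stands as written.
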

\begin{proof}
This follows from the long exact sequence of cohomology. See the proof of \cite[Lemma 1.1]{hesselholt} for the details.
\end{proof}

\section{Proof of Hesselholt's Conjecture}

Recall for each $n\geq 0$, we have the Witt polynomial
\[
W_n(X_0,X_1,\ldots,X_n)=X_0^{p^n}+pX_1^{p^{n-1}}+\cdots+p^nX_n=\sum_{i=0}^n p^iX_i^{p^{n-i}}
\]
Fix any $m\geq 0$. Let
\[
\sum_{i=0}^{p-1} (X_{i,0},X_{i,1},\ldots,X_{i,m})=(z_0,z_1,\ldots,z_m)
\]
where on the left we have a sum of Witt vectors. Then we know each $z_n$ is a polynomial in $\mathbb{Z}[\{X_{i,j}\}_{0\leq i\leq p-1,0\leq j\leq n}]$ with no constant term (see \cite[Chapter II, \S 6, Theorem 6]{serre}). By construction of Witt vector addition (see \cite[Chapter II, \S 6, Theorem 7]{serre}) we have
\[
\sum_{i=0}^{p-1} W_n(X_{i,0},X_{i,1},\ldots,X_{i,n})=W_n(z_0,z_1,\ldots,z_n)
\]
for each $0\leq n\leq m$. Now using the expression for the Witt polynomial $W_n$ and dividing through by $p^n$ yields
\begin{equation}\label{eq:zn}
f_n+\sum_{i=0}^{p-1} X_{i,n}-z_n=0
\end{equation}
where
\begin{equation}\label{eq:fn1}
f_n=\frac{1}{p^n}\left(\sum_{i=0}^{p-1}X_{i,0}^{p^n}-z_0^{p^n}\right)+\frac{1}{p^{n-1}}\left(\sum_{i=0}^{p-1}X_{i,1}^{p^{n-1}}-z_1^{p^{n-1}}\right)+\cdots+\frac{1}{p}\left(\sum_{i=0}^{p-1}X_{i,n-1}^p-z_{n-1}^p\right)
\end{equation}
Now for any $1\leq n\leq m$, we may add and subtract $\frac{1}{p}(-f_{n-1})^p$ to obtain
\begin{equation}\label{eq:fn2}
f_n=g_{n-2}+\frac{1}{p}\left(\sum_{i=0}^{p-1}X_{i,n-1}^p-z_{n-1}^p-(-f_{n-1})^p\right)
\end{equation}
where
\begin{equation}\label{eq:gn}
g_{n-2}=\frac{1}{p^n}\left(\sum_{i=0}^{p-1} X_{i,0}^{p^n}-z_0^{p^n}\right)+\cdots+\frac{1}{p^2}\left(\sum_{i=0}^{p-1} X_{i,n-2}^{p^2}-z_{n-2}^{p^2}\right)+\frac{1}{p}(-f_{n-1})^p
\end{equation}

\begin{lem}\label{lem:gn}
Suppose $(a_0,a_1,\ldots,a_m)\in \mathbb{W}_{m+1}(\mathcal{O}_L)$. Then
\[
v_L(g_{n-2}|_{X_{i,j}=\sigma^i(a_j)})\geq p^2\cdot\min\{v_L(a_j)\colon 0\leq j\leq n-2\}
\]
for each $2\leq n\leq m$.
\end{lem}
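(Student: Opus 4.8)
The plan is to bypass the trace estimates entirely and argue purely at the level of polynomials, exploiting the fact that $g_{n-2}\in\mathbb{Z}[\{X_{i,j}\}]$ is homogeneous in a suitably weighted sense. Concretely, I will establish three structural facts about $g_{n-2}$: it has integer coefficients; it involves only the variables $X_{i,j}$ with $0\le i\le p-1$ and $0\le j\le n-2$; and it is isobaric of weight $p^{n}$ for the grading in which $X_{i,j}$ has weight $p^{j}$. Granting these, the lemma follows immediately: each monomial $\prod_{i,j}X_{i,j}^{\alpha_{i,j}}$ occurring in $g_{n-2}$ satisfies $\sum_{i,j}\alpha_{i,j}p^{j}=p^{n}$ with every $j\le n-2$, so its total degree obeys $\sum_{i,j}\alpha_{i,j}\ge p^{n}/p^{n-2}=p^{2}$; substituting $X_{i,j}=\sigma^{i}(a_{j})$ and using that $\sigma$ preserves $v_{L}$ and that $v_{L}(a_{j})\ge 0$, this monomial is sent to an element of valuation $\sum_{i,j}\alpha_{i,j}v_{L}(a_{j})\ge p^{2}\cdot\min\{v_{L}(a_{j}):0\le j\le n-2\}$; since the coefficients are integers, hence of non-negative valuation, the ultrametric inequality yields the asserted bound for $v_{L}(g_{n-2}|_{X_{i,j}=\sigma^{i}(a_{j})})$.

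It thus remains to verify the three structural facts. For the variable set: formula \eqref{eq:fn1} with $n$ replaced by $n-1$ displays $f_{n-1}$ as a polynomial in the $X_{i,k}$ with $k\le n-2$ only, and the remaining summands of $g_{n-2}$ in \eqref{eq:gn} visibly involve only $X_{i,k}$ with $k\le n-2$. For integrality: $z_{n}\in\mathbb{Z}[\{X_{i,j}\}]$ is known, so $f_{n}=z_{n}-\sum_{i}X_{i,n}$ has integer coefficients, and by \eqref{eq:fn2} it suffices that $\frac1p(\sum_{i}X_{i,n-1}^{p}-z_{n-1}^{p}-(-f_{n-1})^{p})$ does too; reducing modulo $p$ and using $z_{n-1}=f_{n-1}+\sum_{i}X_{i,n-1}$ together with $z_{n-1}^{p}\equiv f_{n-1}^{p}+\sum_{i}X_{i,n-1}^{p}$ and $(-f_{n-1})^{p}\equiv -f_{n-1}^{p}\pmod p$, the polynomial in parentheses is divisible by $p$. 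For the weighted homogeneity: I will first recall that $z_{n}$ is isobaric of weight $p^{n}$ under $\deg X_{i,j}=p^{j}$, which follows by induction on $n$ from $\sum_{i}W_{n}(X_{i,0},\dots,X_{i,n})=W_{n}(z_{0},\dots,z_{n})$, since each summand $p^{a}X_{i,a}^{p^{n-a}}$ of the left-hand side has weight $p^{a}\cdot p^{n-a}=p^{n}$, while on the right $p^{n}z_{n}$ equals that side minus $\sum_{a<n}p^{a}z_{a}^{p^{n-a}}$, whose terms all have weight $p^{n}$ by the inductive hypothesis $\deg z_{a}=p^{a}$. Then each $\sum_{i}X_{i,j}^{p^{n-j}}-z_{j}^{p^{n-j}}$ is isobaric of weight $p^{j}\cdot p^{n-j}=p^{n}$, and $f_{n-1}=z_{n-1}-\sum_{i}X_{i,n-1}$ is isobaric of weight $p^{n-1}$ so that $(-f_{n-1})^{p}$ is isobaric of weight $p^{n}$; adding up, $g_{n-2}$ is isobaric of weight $p^{n}$.

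The argument is entirely formal, so I anticipate no real obstacle; the only point requiring a little care is keeping the grading and the variable set in step, since the crucial degree estimate $\sum_{i,j}\alpha_{i,j}\ge p^{2}$ uses both that $g_{n-2}$ has weight exactly $p^{n}$ and that no variable $X_{i,j}$ with $j>n-2$ occurs in it. It is perhaps worth noting that Lemmas \ref{lem:trineq} and \ref{lem:trdiff} play no role in the proof of this particular lemma.
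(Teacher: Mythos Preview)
Your proof is correct and follows essentially the same approach as the paper: both arguments establish that $g_{n-2}$ has integer coefficients (via the same Freshman's dream computation), involves only the variables $X_{i,j}$ with $j\le n-2$, and consists of monomials of total degree at least $p^{2}$, then conclude by the ultrametric inequality and Galois-invariance of $v_{L}$. The one cosmetic difference is how the degree bound is reached: the paper reads it off directly from \eqref{eq:gn} (each summand visibly contributes monomials of degree $\ge p^{2}$, since $f_{n-1}$ already has degree $\ge p$), whereas you deduce it from the stronger observation that $g_{n-2}$ is isobaric of weight $p^{n}$ under $\deg X_{i,j}=p^{j}$, which combined with the constraint $j\le n-2$ forces total degree $\ge p^{n}/p^{n-2}=p^{2}$.
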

\begin{proof}
From (\ref{eq:zn}) and (\ref{eq:fn1}) we know $f_n$ is a polynomial in $\mathbb{Z}[\{X_{i,j}\}_{0\leq i\leq p-1,0\leq j\leq n-1}]$ with no constant term, and each monomial of $f_n$ has degree at least $p$. From (\ref{eq:zn}) we know $\sum_{i=0}^{p-1}X_{i,n-1}=z_{n-1}-f_{n-1}$, implying $\sum_{i=0}^{p-1}X_{i,n-1}^p\equiv z_{n-1}^p+(-f_{n-1})^p\pmod{p}$, so in view of (\ref{eq:fn2}) we see $g_{n-2}$ has integer coefficients. Thus from (\ref{eq:gn}) we know $g_{n-2}$ is a polynomial in $\mathbb{Z}[\{X_{i,j}\}_{0\leq i\leq p-1,0\leq j\leq n-2}]$ with no constant term, and each monomial of $g_{n-2}$ has degree at least $p^2$. Hence recalling that $v_L(\sigma^i(a_j))=v_L(a_j)$ (see \cite[Chapter II, \S 2, Corollary 3]{serre}), and using the properties of valuations, it is clear we have the desired inequality.
\end{proof}

\begin{lem}\label{lem:ineq}
Suppose $(a_0,a_1,\ldots,a_m)\in \mathbb{W}_{m+1}(\mathcal{O}_L)^{\tr=0}$. Then
\[
v_L(a_{n-1})\geq\min\{\frac{v_L(a_n)+t(p-1)}{p},t(p-1)\}
\]
for each $1\leq n\leq m$.
\end{lem}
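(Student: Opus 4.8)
The plan is to evaluate the polynomial identities (\ref{eq:zn}) and (\ref{eq:fn2}) at $X_{i,j}=\sigma^i(a_j)$, read off $L$-valuations, and run an induction on $n$. Since Hesselholt's reduction puts us in the case where $L/K$ is totally ramified cyclic of degree $p$, we have $e(L/K)=p$, so that $v_L=p\cdot v_K$ on $K$; combined with Lemmas \ref{lem:trineq} and \ref{lem:trdiff} this gives, for every $b\in\mathcal{O}_L$,
\[
v_L(\tr(b))\ge v_L(b)+t(p-1),\qquad v_L\!\left(\tfrac1p\big(\tr(b^p)-\tr(b)^p\big)\right)=p\,v_L(b).
\]
The second identity is the crucial input: the displayed quantity has valuation \emph{exactly} $p\,v_L(b)$, not merely a lower bound, which is what will let a minimum on the right-hand side below transfer to $v_L(a_{n-1})$.

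Now set $X_{i,j}=\sigma^i(a_j)$ throughout. Because $(a_0,\dots,a_m)$ is annihilated by the trace and $G=\langle\sigma\rangle$ acts on Witt vectors componentwise, the tuple $(z_0,\dots,z_m)$ becomes the Witt-vector sum $\sum_{i=0}^{p-1}\sigma^i(a_0,\dots,a_m)=0$, so every $z_j$ evaluates to $0$; then (\ref{eq:zn}) shows that $f_n$ and $f_{n-1}$ evaluate to $-\tr(a_n)$ and $-\tr(a_{n-1})$. Feeding these into (\ref{eq:fn2}) (read with $g_{-1}=0$ and $f_0=0$ when $n=1$) and using $\sum_i\sigma^i(a_{n-1})^p=\tr(a_{n-1}^p)$, one obtains for each $1\le n\le m$:
\[
\tfrac1p\big(\tr(a_{n-1}^p)-\tr(a_{n-1})^p\big)=-\tr(a_n)-g_{n-2}\big|_{X_{i,j}=\sigma^i(a_j)}.
\]
By the first paragraph the left side has valuation exactly $p\,v_L(a_{n-1})$; Lemma \ref{lem:trineq} gives $v_L(\tr(a_n))\ge v_L(a_n)+t(p-1)$; and Lemma \ref{lem:gn} gives $v_L(g_{n-2}|_{X_{i,j}=\sigma^i(a_j)})\ge p^2\min\{v_L(a_j):0\le j\le n-2\}$ (vacuously $+\infty$ when $n=1$). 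The non-archimedean property applied to the right-hand side then yields
\[
v_L(a_{n-1})\ge\min\Big\{\tfrac{v_L(a_n)+t(p-1)}{p},\ p\cdot\min_{0\le j\le n-2}v_L(a_j)\Big\}.
\]

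The last point is to replace the second entry of this minimum by $t(p-1)$, which is where the induction enters. For $n=1$ the second entry is absent and the bound already reads $v_L(a_0)\ge\frac{v_L(a_1)+t(p-1)}{p}$, which implies the claim. For $n\ge2$, assume the lemma for all indices less than $n$; applying it at index $j+1$ for $0\le j\le n-2$ gives $v_L(a_j)\ge\min\{\frac{v_L(a_{j+1})+t(p-1)}{p},t(p-1)\}\ge\frac{t(p-1)}{p}$, where only $v_L(a_{j+1})\ge0$, i.e.\ $a_{j+1}\in\mathcal{O}_L$, is used. Hence $p\cdot\min_{0\le j\le n-2}v_L(a_j)\ge t(p-1)$, and substituting this into the displayed bound completes the induction. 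I expect the only genuine obstacle to be the careful bookkeeping in evaluating (\ref{eq:fn2}) correctly and in confirming that the trace-difference term has an \emph{exact} (rather than merely bounded) valuation; once that is secured, the remaining estimates and the induction are routine.
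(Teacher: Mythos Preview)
Your proof is correct and follows essentially the same route as the paper: substitute $X_{i,j}=\sigma^i(a_j)$, $z_j=0$ into (\ref{eq:zn}) and (\ref{eq:fn2}) to obtain $\tfrac1p(\tr(a_{n-1}^p)-\tr(a_{n-1})^p)=-\tr(a_n)-g_{n-2}$, then use Lemmas~\ref{lem:trdiff}, \ref{lem:trineq}, and \ref{lem:gn} together with the induction hypothesis to replace the $g_{n-2}$ term by $t(p-1)$. The only cosmetic difference is that the paper takes $K$-valuations (so the left side has valuation exactly $v_L(a_{n-1})$ directly) while you take $L$-valuations and divide by $p$; the resulting inequalities and the induction are identical.
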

\begin{proof}
Since $(a_0,a_1,\ldots,a_m)\in \mathbb{W}_{m+1}(\mathcal{O}_L)^{\tr=0}$, by definition of the $z_n$ we can take $z_n=0$ for $0\leq n\leq m$ and $X_{i,j}=\sigma^i(a_j)$. Then from (\ref{eq:zn}) we see $-f_n=\tr(a_n)$ for each $n$, and hence (\ref{eq:fn2}) reduces to $\frac{\tr(a_{n-1}^p)-\tr(a_{n-1})^p}{p}=-\tr(a_n)-g_{n-2}|_{X_{i,j}=\sigma^i(a_j)}$. Taking $K$-valuations of both sides of this equation then applying Lemma \ref{lem:trdiff} and Lemma \ref{lem:trineq} gives
\begin{equation}\label{eq:ineq}
v_L(a_{n-1})\geq\min\{\frac{v_L(a_n)+t(p-1)}{p},v_K(g_{n-2}|_{X_{i,j}=\sigma^i(a_j)})\}
\end{equation}
Since $f_0=0$ by (\ref{eq:fn1}), we see $g_{-1}=0$ by (\ref{eq:gn}). Hence taking $n=1$ in (\ref{eq:ineq}), we see that the claim holds for $n=1$. Now for the inductive step let $N\geq 2$ and suppose the claim holds for all $1\leq n\leq N-1$. Then we have
\begin{eqnarray*}
v_L(a_{N-1})&\geq&\min\{\frac{v_L(a_N)+t(p-1)}{p},\frac{1}{p}\cdot v_L(g_{N-2}|_{X_{i,j}=\sigma^i(a_j)})\}\\
&\geq&\min\{\frac{v_L(a_N)+t(p-1)}{p},\frac{1}{p}\cdot p^2\cdot\min\{v_L(a_{n-1})\colon 1\leq n\leq N-1\}\}\\
&\geq&\min\{\frac{v_L(a_N)+t(p-1)}{p},\frac{1}{p}\cdot p^2\cdot \frac{t(p-1)}{p}\}
\end{eqnarray*}
where the first inequality follows from (\ref{eq:ineq}), the second by Lemma \ref{lem:gn}, and the third by the induction hypothesis. This completes the inductive step.
\end{proof}

By Lemma \ref{lem:rmzeroforn1}, and recalling that $H^1(G,\mathbb{W}_{m+1}(\mathcal{O}_L))=\frac{\mathbb{W}_{m+1}(\mathcal{O}_L)^{\tr=0}}{(\sigma-1)\mathbb{W}_{m+1}(\mathcal{O}_L)}$ (see \cite[Chapter VIII, \S 4]{serre}), the following proposition (a generalisation of \cite[Proposition 2.5]{hesselholt}) proves Hesselholt's conjecture.

\begin{prop}\label{prop:rmzero}
The map
\begin{eqnarray*}
R^m_*\colon\frac{\mathbb{W}_{m+1}(\mathcal{O}_L)^{\tr=0}}{(\sigma-1)\mathbb{W}_{m+1}(\mathcal{O}_L)}&\rightarrow&\frac{\mathcal{O}_L^{\tr=0}}{(\sigma-1)\mathcal{O}_L}\\
(a_0,a_1,\ldots,a_m)&\mapsto& a_0
\end{eqnarray*}
is equal to zero, provided that $p^m>t$.
\end{prop}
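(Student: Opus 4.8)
The plan is to show that any class $(a_0, a_1, \ldots, a_m) \in \mathbb{W}_{m+1}(\mathcal{O}_L)^{\tr=0}$ with $a_0$ representing a \emph{non-zero} class in $\frac{\mathcal{O}_L^{\tr=0}}{(\sigma-1)\mathcal{O}_L}$ cannot exist once $p^m > t$, by deriving a contradiction from the valuation bounds already assembled. First I would apply Lemma \ref{lem:nonzeroclass} to conclude that if $R^m_*$ does not kill the class, then $v_L(a_0) \leq t-1$. On the other hand, Lemma \ref{lem:ineq} gives, for each $1 \leq n \leq m$, the recursive bound $v_L(a_{n-1}) \geq \min\{\frac{v_L(a_n) + t(p-1)}{p}, t(p-1)\}$. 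The strategy is to iterate this recursion downward from $n = m$ to $n = 1$ to obtain a lower bound on $v_L(a_0)$ that exceeds $t-1$ when $p^m > t$, contradicting the upper bound.

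To carry out the iteration, I would first dispose of the easy case: if at any stage the minimum in Lemma \ref{lem:ineq} is achieved by the term $t(p-1)$, then $v_L(a_{n-1}) \geq t(p-1) \geq t$ (since $p \geq 2$), and then running the remaining steps down to $a_0$ only increases things — more precisely, one checks that the map $x \mapsto \frac{x + t(p-1)}{p}$ has $t$ as a fixed point and is increasing, so a value $\geq t$ at level $n-1$ propagates to a value $\geq t$ at level $0$, again contradicting $v_L(a_0) \leq t-1$. So I may assume the first branch $\frac{v_L(a_n)+t(p-1)}{p}$ is always the active one. Writing $v_n := v_L(a_n)$ and using $v_m \geq 0$, the recursion $v_{n-1} \geq \frac{v_n + t(p-1)}{p}$ unwinds to
\[
v_0 \geq \frac{v_m}{p^m} + t(p-1)\left(\frac{1}{p} + \frac{1}{p^2} + \cdots + \frac{1}{p^m}\right) \geq t(p-1) \cdot \frac{1}{p}\cdot\frac{1 - p^{-m}}{1 - p^{-1}} = t\left(1 - \frac{1}{p^m}\right).
\]
Thus $v_0 \geq t - \frac{t}{p^m} > t - 1$ precisely when $\frac{t}{p^m} < 1$, i.e. when $p^m > t$. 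Since $v_0$ is an integer, $v_0 > t-1$ forces $v_0 \geq t$, contradicting Lemma \ref{lem:nonzeroclass}. Hence no such class exists and $R^m_*$ is zero.

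The main obstacle I anticipate is bookkeeping the two branches of the minimum in Lemma \ref{lem:ineq} cleanly across all $m$ levels of the recursion, rather than any deep new idea: one must make sure the geometric-series estimate is valid in the "good" branch and that the "bad" branch (where $t(p-1)$ wins) genuinely feeds forward to a value $\geq t$ at level $0$. A clean way to package both cases uniformly is to prove by downward induction on $n$ that $v_L(a_n) \geq t(1 - p^{n-m})$ for all $0 \leq n \leq m$ — the base case $n = m$ being $v_L(a_m) \geq 0 = t(1 - p^0)$, and the inductive step following from Lemma \ref{lem:ineq} together with the observation that $\frac{t(1 - p^{n-m}) + t(p-1)}{p} = t(1 - p^{n-1-m}) \leq t(p-1)$ — and then specialize to $n = 0$. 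This inductive formulation sidesteps the case analysis entirely, and with $p^m > t$ it yields $v_L(a_0) \geq t - t/p^m > t - 1$, hence $v_L(a_0) \geq t$, contradicting Lemma \ref{lem:nonzeroclass} unless $a_0 \in (\sigma-1)\mathcal{O}_L$.
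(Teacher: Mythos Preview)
Your proposal is correct and follows the same strategy as the paper's proof: downward induction via Lemma \ref{lem:ineq}, starting from $v_L(a_m)\geq 0$ and ending with $v_L(a_0)>t-1$, then invoking Lemma \ref{lem:nonzeroclass}. The only difference is cosmetic---the paper uses the slightly slicker inductive invariant $v_L(a_n)>t-p^n$ (whose base case $n=m$ is exactly the hypothesis $p^m>t$, and which lands directly on $v_L(a_0)>t-1$ without any integrality remark) in place of your $v_L(a_n)\geq t(1-p^{n-m})$.
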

\begin{proof}
Suppose $(a_0,a_1,\ldots,a_m)\in \mathbb{W}_{m+1}(\mathcal{O}_L)^{\tr=0}$. Note that $v_L(a_n)>t-p^n$ implies
\[
v_L(a_{n-1})\geq\min\{\frac{v_L(a_n)+t(p-1)}{p},t(p-1)\}>\frac{(t-p^n)+t(p-1)}{p}=t-p^{n-1}
\]
Since $v_L(a_m)>t-p^m$ by hypothesis, we see $v_L(a_0)>t-p^0$ by downward induction. Thus by Lemma \ref{lem:nonzeroclass} we see $a_0$ must represent the zero class in $\frac{\mathcal{O}_L^{\tr=0}}{(\sigma-1)\mathcal{O}_L}$.
\end{proof}

\bibliographystyle{unsrt}
\bibliography{biblio}
\end{document}